\title{Hypersurfaces that are not stably rational}
\author{Burt Totaro}
\date{  }
\def\Z{\text{\bf Z}}
\def\Q{\text{\bf Q}}
\def\C{\text{\bf C}}
\def\P{\text{\bf P}}
\def\F{\text{\bf F}}
\def\arrow{\rightarrow}
\def\dR{\text{dR}}
\def\Hom{\text{Hom}}
\def\Spec{\text{Spec}}
\def\m{{\mathfrak m}}
\def\QQ{\overline{\Q}}
\begin{document}
\maketitle
\newtheorem{theorem}{Theorem}[section]
\newtheorem{corollary}[theorem]{Corollary}
\newtheorem{lemma}[theorem]{Lemma}

\theoremstyle{definition}
\newtheorem{definition}[theorem]{Definition}
\newtheorem{example}[theorem]{Example}

\theoremstyle{remark}
\newtheorem{remark}[theorem]{Remark}

A fundamental problem of algebraic geometry is to determine
which varieties are rational, that is, isomorphic to projective
space after removing lower-dimensional subvarieties from both
sides. In particular,
we want to know which smooth hypersurfaces
in projective space are rational. An easy case is that
smooth complex hypersurfaces of degree at least $n+2$ in $\P^{n+1}$
are not covered by rational curves and hence are not rational.

By far the most general result on rationality of hypersurfaces
is Koll\'ar's theorem that for $d$ at least $2\lceil (n+3)/3\rceil$,
a very general complex hypersurface of degree $d$
in $\P^{n+1}$ is not ruled and therefore
not rational \cite[Theorem 5.14]{Kollarbook}. Very little
is known about rationality in lower degrees, except
for cubic 3-folds and quintic 4-folds
\cite{CG}, \cite[Chapter 3]{Pukhlikov}.

A rational variety is also stably rational,
meaning that some product of the variety with projective space
is rational. Many techniques for proving non-rationality give
no information about stable rationality.
Voisin made a breakthrough in 2013 by showing
that a very general quartic double solid
(a double cover of $\P^3$ ramified over a quartic surface)
is not stably rational \cite{Voisin}. These
Fano 3-folds
were known to be non-rational over the complex numbers,
but stable rationality was an open question. Voisin's method
was to show that the Chow group of zero-cycles is not
universally trivial (that is, the Chow group becomes
nontrivial over some extension
of the base field), by degenerating the variety to a nodal 3-fold
which has a resolution of singularities $X$ with
nonzero torsion in $H^3(X,\Z)$.

Colliot-Th\'el\`ene and Pirutka simplified and generalized
Voisin's degeneration
method. They deduced
that very general quartic 3-folds are not stably rational \cite{CTP}.
This was striking, in that non-rationality of smooth quartic 3-folds
was the original triumph of Iskovskikh-Manin's work
on birational rigidity, while stable rationality
of these varieties was unknown \cite{IM}. Beauville applied the
method to prove that very general sextic double solids,
quartic double 4-folds, and quartic double 5-folds
are not stably rational \cite{Beauvillesextic, Beauvillequartic}.

In this paper, we show that a wide class of hypersurfaces 
in all dimensions are not
stably rational. Namely, for all $d\geq 2\lceil(n+2)/3\rceil$ and $n\geq 3$,
a very general complex hypersurface of degree $d$ in $\P^{n+1}$
is not stably rational (Theorem \ref{main}).
The theorem covers all the degrees in which Koll\'ar
proved non-rationality.
In fact, we get a bit more,
since Koll\'ar assumed $d\geq 2\lceil (n+3)/3\rceil$. For example,
very general quartic 4-folds are not stably rational,
whereas it was not even known whether these varieties are rational.

The method applies to some smooth hypersurfaces over $\QQ$
in each even degree. Section \ref{examples} gives
some examples over $\Q$ which
are not stably rational over $\C$.

The idea is that the most powerful results are obtained
by degenerating a smooth complex variety to a singular variety
in positive characteristic, rather than to a singular complex
variety. In fact, the best results arise by degenerating
to characteristic 2. We find
that very general hypersurfaces in the given degrees
have Chow group of zero-cycles not universally trivial,
which is stronger than being not stably rational.

Koll\'ar also proved non-rationality for several other classes
of rationally connected varieties, such as many ramified covers
of projective space or ramified covers of products of projective spaces
\cite{Kollarhyper, Kollarcover}, \cite[section V.5]{Kollarbook}.
The method of this paper should imply that those examples
are also not stably rational.

Using Theorem \ref{main}, Corollary \ref{rationality} shows for the first
time that a family of rational projective varieties
can specialize to a non-rational variety
with klt singularities. It remains unknown
whether rationality
specializes for smooth projective varieties, or for varieties
with canonical singularities.

I thank Hamid Ahmadinezhad, Jean-Louis Colliot-Th\'el\`ene,
Tommaso de Fernex,
J\'anos Koll\'ar, Adrian Langer,
and Claire Voisin for useful conversations.
This work was supported by The Ambrose Monell
Foundation and Friends, via the Institute for Advanced Study,
and by NSF grant DMS-1303105.

\section{Notation}

A property holds for {\it very general }complex points
of a complex variety $S$ if it holds for all points outside
a countable union of lower-dimensional closed subvarieties of $S$.
In particular, we can talk about properties of very general
hypersurfaces of a fixed degree $d$ in $\P^{n+1}_{\C}$,
since the space $S$ of all hypersurfaces of degree $d$ is an algebraic variety
(in fact, a projective space).

Let $R$ be a discrete valuation ring with fraction field $K$
and residue field $k$. Given a proper flat morphism
${\cal X}\arrow \Spec(R)$, we say that the general fiber
$X={\cal X}\times_R K$ {\it degenerates }to the special fiber
$Y={\cal X}\times_R k$. We also say that any base
change of $X$ to a larger field (perhaps algebraically closed)
degenerates to $Y$.

Let $X$ be a scheme of finite type over a field $k$. We say that
the Chow group of zero-cycles of $X$ is {\it universally trivial }if
the flat pullback homomorphism $CH_0(X)\arrow CH_0(X_E)$
is surjective for every field $E$ containing $k$. For $X$
a smooth proper variety over $k$, universal triviality
of $CH_0$ is equivalent to many other conditions:
the degree map $CH_0(X_E)\arrow \Z$ is an isomorphism
for every field $E$ containing $k$,
or $X$ has a decomposition of the diagonal
of a certain type (written out in the proof of Lemma \ref{form}),
or $X$ has trivial unramified cohomology
with coefficients in any cycle module \cite{Merkurjev},
or all Chow groups of $X$ below the top dimension
are universally supported on a divisor. A reference
for these equivalences is \cite[Theorem 2.1]{Totaromotive}.

We use the following fact, due to Colliot-Th\'el\`ene and Coray
in characteristic zero and to Fulton in general \cite[Proposition 6.3]{CC},
\cite[Example 16.1.12]{Fulton}. Fulton assumes that the base field
is algebraically closed, but the proof works without that. These
references only treat
Theorem \ref{zero} for birational equivalence, but it follows for stable
birational equivalence by the formula for the Chow groups
of $X\times \P^n$ \cite[Theorem 3.3]{Fulton}.

\begin{theorem}
\label{zero}
The Chow group of zero-cycles is invariant under stable birational
equivalence for smooth projective varieties over a field.
\end{theorem}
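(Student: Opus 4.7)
The plan is to reduce the stable-birational case to the birational case already granted by Colliot-Th\'el\`ene--Coray and Fulton, using the computation of $CH_0$ of a product with projective space.

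First I would unpack the definition. Given smooth projective $X$ and $Y$ that are stably birational over $k$, there exist integers $m,n\geq 0$ such that $X\times\P^m$ and $Y\times\P^n$ are birational. Both products are again smooth projective, so applying the cited birational invariance of $CH_0$ to the pair $(X\times\P^m,Y\times\P^n)$ yields
\[ CH_0(X\times\P^m)\cong CH_0(Y\times\P^n). \]

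Next I would verify that, for any smooth projective variety $Z$ over $k$ and any $m\geq 0$, one has $CH_0(Z\times\P^m)\cong CH_0(Z)$. This is the dimension-zero piece of Fulton's Theorem 3.3, which supplies an exterior product isomorphism of graded groups $CH_*(Z\times\P^m)\cong CH_*(Z)\otimes_{\Z}CH_*(\P^m)$. In the dimension-zero part only the summand $CH_0(Z)\otimes CH_0(\P^m)$ contributes, since $CH_i$ vanishes for negative $i$ and $CH_0(\P^m)\cong\Z$. Explicitly, the isomorphism is realized by pushforward along the inclusion of any $k$-rational fiber $Z\cong Z\times\{p_0\}\inj Z\times\P^m$. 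Chaining the two isomorphisms together gives
\[ CH_0(X)\cong CH_0(X\times\P^m)\cong CH_0(Y\times\P^n)\cong CH_0(Y), \]
which is the conclusion.

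The real work is in the birational case itself, which is taken as input: there one must construct the isomorphism via a correspondence (essentially the closure of the graph of the birational map) and check that the transpose supplies a two-sided inverse on $CH_0$, a non-trivial argument because the graph closure is typically singular and its exceptional components must be controlled on both sides. Given that input, the extension to stably birational varieties is essentially a formal calculation with the projective bundle formula and presents no further obstacle.
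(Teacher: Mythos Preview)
Your proposal is correct and matches the paper's approach exactly: the paper also takes the birational case as input from Colliot-Th\'el\`ene--Coray and Fulton, and then deduces the stably birational case from the formula $CH_*(X\times\P^m)\cong CH_*(X)\otimes CH_*(\P^m)$ (Fulton, Theorem~3.3). Your write-up is in fact slightly more detailed than the paper's one-line reduction.
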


\section{Hypersurfaces}

\begin{theorem}
\label{main}
Let $X$ be a very general hypersurface of degree $d$ in
$\P^{n+1}_{\C}$ with $n\geq 3$. If $d\geq 2\lceil (n+2)/3\rceil$,
then $CH_0$ of $X$ is not universally trivial. It follows
that $X$ is not stably rational. For $d$ even, the conclusions also hold
for some smooth hypersurfaces over $\QQ$.
\end{theorem}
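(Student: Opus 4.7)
The plan is to apply the Colliot-Th\'el\`ene--Pirutka degeneration method, but with a degeneration of $X$ to positive characteristic rather than to a singular complex variety. Following Koll\'ar's original strategy for non-rationality, I would degenerate the very general complex hypersurface $X \subset \P^{n+1}_{\C}$ to a singular hypersurface $Y \subset \P^{n+1}_k$ of the same degree $d$ over an algebraically closed field $k$ of characteristic $2$. Concretely, one takes a flat family $\mathcal{X} \to \Spec R$ over a mixed-characteristic discrete valuation ring $R$ with fraction field of characteristic $0$ and residue field $k$, defined by a suitable polynomial lift of the equation of $Y$. To transfer the failure of universal triviality of $CH_0$ from $Y$ to $X$, I would verify that $Y$ admits a resolution of singularities $\pi \colon Z \to Y$ whose exceptional fibers all have universally trivial $CH_0$; this is the hypothesis of the Colliot-Th\'el\`ene--Pirutka specialization theorem, which then reduces the statement to showing that $CH_0(Z)$ is not universally trivial. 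Combined with Theorem~\ref{zero}, this gives the non-stable-rationality conclusion.

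The core content is thus to construct $Y$ so that the resolution $Z$ has non-universally-trivial $CH_0$. The plan is to exploit characteristic $2$ specifically: choose $Y$ with defining equation of a special form involving squares of auxiliary polynomials, so that $Y$ (or a birationally equivalent variety) carries a purely inseparable double-cover structure. A standard computation using the Cartier operator and logarithmic derivatives shows that in characteristic $2$ this forces a nonzero global section of the canonical sheaf $\omega_Z$ on the resolution. Since nonvanishing of $H^0(Z, \omega_Z)$ obstructs universal triviality of $CH_0(Z)$ via the decomposition of the diagonal (as encoded in \cite[Theorem 2.1]{Totaromotive}), this yields the required nontriviality of $CH_0(Z)$. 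The degree bound $d \geq 2\lceil(n+2)/3\rceil$ should emerge as the numerical condition on the degrees of the auxiliary polynomials in the equation of $Y$ needed both to make $\omega_Z$ effective and to keep the singular locus small enough to admit a $CH_0$-trivial resolution.

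The main obstacle, and the technical heart of the paper, is the explicit construction and singularity analysis of $Y$. One must simultaneously (i) write down a concrete equation in characteristic $2$ producing a nonzero top form on the resolution, (ii) describe the singular locus precisely and verify that each singular type admits a universally $CH_0$-trivial resolution (e.g.\ successive blowups with exceptional divisors that are projective bundles over $CH_0$-trivial bases), and (iii) check that $H^0(Z, \omega_Z) \neq 0$ exactly in the claimed degree range. The bound $d \geq 2\lceil(n+2)/3\rceil$ is the tight numerical output of this combined calculation. Finally, for $d$ even and the $\QQ$-statement, I would take the defining equation of $X$ over $\Z$ of the form $F_0 + 2G$, where $F_0 \bmod 2$ gives the equation of $Y$ and $G \in \Z[x_0, \ldots, x_{n+1}]$ is a generic polynomial of degree $d$; genericity of $G$ ensures smoothness over $\Q$, while reduction modulo $2$ via $\Z_{(2)}$ recovers the same degeneration, so that the identical argument applies to the base change to $\C$.
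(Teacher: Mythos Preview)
Your overall architecture---degenerate to characteristic~$2$, find a $CH_0$-trivial resolution $Z$, and apply Colliot-Th\'el\`ene--Pirutka---matches the paper. But there is a genuine gap at the heart of your plan: you propose to obstruct universal triviality of $CH_0(Z)$ via a nonzero section of the \emph{canonical} sheaf $\omega_Z=\Omega^n_Z$. This cannot work in the interesting range. For $d\leq n+1$ the special fiber $Y$ (a complete intersection of type $(2a,a)$ in $\P(1^{n+2},a)$, so a double cover of a degree-$a$ hypersurface, not itself a hypersurface in $\P^{n+1}$) has $K_Y\cong O(-n-2+2a)$, hence $H^0(Y,K_Y)=0$ whenever $d=2a\leq n+1$, and the same holds on the resolution. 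The paper's point is that in characteristic~$2$ the inseparable double-cover structure produces an injection of a line bundle $M\cong O(-n-2+3a)$ into $\Omega^{\,n-1}_{Z}$, not into $\Omega^n_Z$; the bound $a\geq\lceil(n+2)/3\rceil$ is exactly what makes $H^0(Z,\Omega^{\,n-1})\neq 0$. The obstruction lemma (Lemma~\ref{form}) is correspondingly stated for $H^0(Z,\Omega^i)$ with any $i>0$, and the case $i=n-1$ is the one that carries the argument in the Fano range. Your reference to the Cartier operator is in the right spirit, but the output is an $(n-1)$-form, and missing this is what makes your numerics collapse to the trivial case $d\geq n+2$.

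Second, your proposal treats all degrees uniformly, but the paper's degeneration to an inseparable double cover only handles even $d=2a$. For odd $d=2a+1$ the paper needs an additional step: degenerate first (over $\C$) to the \emph{reducible} variety $X_{2a}\cup H$ with $X_{2a}$ a degree-$2a$ hypersurface and $H$ a hyperplane, invoke a variant of the specialization theorem adapted to reducible special fibers (Lemma~\ref{reducible}), and then push the resulting decomposition of the diagonal down to characteristic~$2$ via the even-degree argument. The contradiction there again hinges on $H^0(Y',\Omega^{\,n-1})\neq 0$ versus $H^0(Z',\Omega^{\,n-1})=0$ for the resolved hyperplane slice $Z'$. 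None of this two-stage structure appears in your outline, so as written your plan covers only even $d$.
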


\begin{proof}

We first prove this when the degree $d$ is even, $d=2a$.
Then a smooth hypersurface of degree $2a$ can degenerate to
a double cover of a hypersurface of degree $a$.
We consider such a degeneration to an inseparable double
cover in characteristic 2. 

Explicitly, following Mori \cite[Example 4.3]{Mori}, let $R$ be a discrete
valuation ring, and let $S$ be the weighted projective space
$\P(1^{n+2}a)=\P(x_0,\ldots,x_{n+1},y)$ over $R$.
Let $f,g\in R[x_0,\ldots,x_{n+1}]$ be homogeneous polynomials
of degree $2a$ and $a$, respectively.
Let $t$ be a uniformizer for $R$.
Let $Z$ be the complete intersection subscheme of $S$ defined by $y^2=f$
and $g=ty$.
Then the generic fiber
of $Z$ over $R$ (where $t\neq 0$) is isomorphic to the hypersurface
$g^2-t^2f=0$ in $\P^{n+1}$ of degree $2a$,
whereas the special fiber (where $t=0$)
is a double cover of the hypersurface $g=0$ in $\P^{n+1}$ of degree $a$.
We consider the case where the residue field $k$ of $R$ has characteristic
2; then the special fiber $Y$ is an inseparable double cover of $\{ g=0\}$.

Assume that the residue field $k$ is algebraically closed.
Using only that $a\geq 2$,
Koll\'ar showed that for general polynomials $f$ and $g$,
the singularities
of $Y$ are etale-locally isomorphic
to $0=y^2+x_1x_2+x_3x_4+\cdots+x_{n-1}x_n+f_3$ if $n$ is even,
or to $0=y^2+x_1^3+x_2x_3+x_4x_5+\cdots+x_{n-1}x_n+f_3$ if $n$ is odd
\cite[proof of Theorem V.5.11]{Kollarbook}. Here
$f_3\in (x_1,\ldots,x_n)^3$ and, for $n$ odd,
the coefficient of $x_1^3$ in $f_3$ is zero.

Then one computes that simply blowing up the singular points gives
a resolution of singularities $Y'\arrow Y$. Moreover, each exceptional
divisor of this resolution is isomorphic to a quadric $Q^{n-1}$
over $k$, which is smooth if $n$ is even and is singular at one point
if $n$ is odd. This quadric is a smooth projective rational variety over $k$
for $n$ even, and it is a projective cone for $n$ odd.
By Theorem \ref{zero} (for $n$ even), the Chow group $CH_0$
of each exceptional divisor of $Y'\arrow Y$
is universally trivial.
Therefore, for every extension field $E$ of $k$, the pushforward
homomorphism $CH_0Y'_E\arrow CH_0Y_E$ is an isomorphism.

For a smooth $n$-fold $X$ over a field,
the {\it canonical bundle }is the line bundle $K_X=\Omega^n_X$.
Let $M$ be the pullback to $Y'$ of the line bundle
$K_{\{ g=0\} }\otimes O(a)^{\otimes 2}\cong O(-n-2+3a)$ on
the hypersurface $\{ g=0\}\subset \P^{n+1}$. Since
$a\geq \lceil (n+2)/3\rceil$, we have $H^0(Y',M)\neq 0$.
Next, for $n\geq 3$,
Koll\'ar computed that there is a nonzero map from the
line bundle $M$ to $\Omega^{n-1}_{Y'}$ \cite[proof
of Theorem V.5.11]{Kollarbook}.
In particular, it follows that the $n$-fold $Y'$
has $H^0(Y',\Omega^{n-1})\neq 0$.

Under the slightly stronger assumption that $a\geq \lceil
(n+3)/3\rceil$, the line bundle $M$ is big, which Koll\'ar
used to show that $Y$ is not separably uniruled. This is a strong
conclusion. It follows, for example,
that there is no generically smooth dominant
rational map from a separably rationally connected variety
to $Y$. In particular, $Y$ is not stably rational. With that
approach, however, it was not clear how to show
that a lift of $Y$ to characteristic zero is not stably rational.

\begin{lemma}
\label{form}
Let $X$ be a smooth projective variety over a field $k$.
If $H^0(X,\Omega^i)$ is not zero for some $i>0$,
then $CH_0$ of $X$ is not universally trivial. More precisely,
if $k$ has characteristic zero, then $CH_0\otimes\Q$ is not universally
trivial; and if $k$ has characteristic $p>0$, then
$CH_0/p$ is not universally trivial.
\end{lemma}

\begin{proof}
This is part of the ``generalized Mumford theorem''
for $k$ of characteristic zero.
The proof by Bloch and Srinivas \cite[Theorem 3.13]{Voisinbook},
using the cycle class map in de Rham cohomology, is
similar to what follows.

For $k$ of characteristic $p>0$,
the hypothesis that $H^0(X,\Omega^i)\neq 0$ remains true
after enlarging $k$, and so we can assume that $k$ is perfect.
In that case, we can use the cycle class map
in de Rham cohomology constructed by Gros \cite[section II.4]{Gros}. Namely,
for a smooth scheme $X$ over $k$, let $\Omega^i_{\log}$ be the subsheaf
of $\Omega^i$ on $X$ in the etale topology generated
by products $df_1/f_1\wedge\cdots \wedge df_i/f_i$ for nonvanishing
regular functions
$f_1,\ldots,f_i$. This is a sheaf of $\F_p$-vector spaces,
not of $O_X$-modules.
Gros defined a cycle map
from $CH^i(X)/p$ to $H^i(X,\Omega^i_{\log})$, which maps both to
$H^i(X,\Omega^i)$ and to de Rham cohomology $H^{2i}_{\dR}(X/k)$.

As a result, for $X$ smooth over $k$ and $Y$ smooth proper over $k$, with
$y=\dim(Y)$,
a correspondence $\alpha$ in $CH^y(X\times_k Y)/p$
determines a pullback map $\alpha^*\colon H^0(Y,\Omega^i)
\arrow H^0(X,\Omega^i)$ for all $i$. Explicitly, $\alpha$
has a class in 
\begin{align*}
H^y(X\times Y,\Omega^y_{X\times Y}) &= H^y(X\times Y,
\oplus_j \Omega^j_X\otimes \Omega^{y-j}_Y)\\
&= \oplus_{i,j} H^i(X,\Omega^j)\otimes H^{y-i}(Y,\Omega^{y-j}).
\end{align*}
In particular, $\alpha$ determines an element $\alpha^*$ of $H^0(X,\Omega^i)
\otimes H^{y}(Y,\Omega^{y-i})$, which is $\Hom(H^0(Y,\Omega^i),
H^0(X,\Omega^i))$ by Serre duality, as we want. (The last step uses
properness of $Y$ over $k$.)

Let $X$ be a smooth projective variety over $k$ with $CH_0$
universally trivial, or just with $CH_0/p$ universally trivial. Since the
diagonal $\Delta$ in $CH^n(X\times X)$ restricts (over the generic
point of the first copy of $X$) to a zero-cycle of degree 1
on $X_{k(X)}$, our assumption implies that there
is a zero-cycle $x$ on $X$ such that $\Delta=x$ in $CH_0(X_{k(X)})/p$.
Equivalently, there is a decomposition of the diagonal,
$$\Delta=X\times x+B$$
in $CH^n(X\times X)/p$ for some cycle $B$ supported on $S\times X$
with $S$ a closed subset not equal to $X$. For any $i>0$,
the pullback $\Delta^*$ from the second copy
of $H^0(X,\Omega^i)$ to the first
is the identity, but the pullback by $X\times x$
or by $B$ is zero. (For $B$, this uses that the restriction of $B$
to $(X-S)\times X$ is zero, and hence
its class in $H^n((X-S)\times X,\Omega^n)$
is zero. As a result, for any form $\theta$ in $H^0(X,\Omega^i)$,
the restriction of $B^*\theta$ to $H^0(X-S,\Omega^i)$ is
zero. But $H^0(X,\Omega^i)\arrow H^0(X-S,\Omega^i)$ is injective,
and so $B^*\theta=0$.)
It follows that $H^0(X,\Omega^i)=0$.
\end{proof}

By Lemma \ref{form},
the resolution $Y'$ discussed above has $CH_0$
not universally trivial. We also know that the resolution $Y'\arrow Y$ induces
an isomorphism on $CH_0$ over all extension fields of $k$.
It follows that any variety $X$
over an algebraically closed field of characteristic zero
that degenerates to $Y$ has $CH_0$ not universally trivial,
by the following result, due in this form to Colliot-Th\'el\`ene
and Pirutka \cite[Th\'eor\`eme 1.12]{CTP}.

\begin{theorem}
\label{ctp}
Let $A$ be a discrete valuation ring with fraction field $K$
and residue field $k$, with $k$ algebraically closed.
Let ${\cal X}$ be a flat proper scheme over $A$
with geometrically integral fibers.
Let $X$ be the general fiber
${\cal X}\times_A K$ and $Y$ the special fiber ${\cal X}\times_A k$.
Assume that there is a proper birational morphism $Y'\arrow Y$
with $Y'$ smooth over $k$
such that $CH_0Y'\arrow CH_0Y$ is universally an isomorphism.
Let $\overline{K}$ be
an algebraic closure of $K$.
Assume that there is a proper
birational morphism $X'\arrow X$ with $X'$ smooth over $K$
such that $CH_0$ of $X'_{\overline{K}}$ is universally trivial.
Then $Y'$ has $CH_0$ universally trivial.
\end{theorem}

As a result,
a very general complex hypersurface of degree $d=2a$ (where
$a\geq \lceil (n+2)/3\rceil$) has $CH_0$ not universally trivial.
In particular, it is not stably rational, by Theorem \ref{zero}.

It remains to consider hypersurfaces of degree $2a+1$, for
$a\geq \lceil (n+2)/3\rceil$. The following approach seems clumsy,
but it works. 

If $2a+1>n+1$, then every smooth hypersurface $W$ of degree $2a+1$
in $\P^{n+1}_{\C}$
has $H^0(W,K_W)\neq 0$, and so $CH_0W$ is not universally
trivial by Lemma \ref{form}. So we can assume that $2a+1\leq n+1$.

Observe that a very general hypersurface of degree $2a+1$
in $\P^{n+1}_{\C}$
degenerates to the union of a hypersurface $X$ of degree $2a$
in $\P^{n+1}_{\C}$ and
a hyperplane $H$. Since the special fiber of that degeneration is reducible,
we need the following variant of Theorem \ref{ctp}, which I worked out
with Colliot-Th\'el\`ene:

\begin{lemma}
\label{reducible}
Let $A$ be a discrete valuation ring with fraction field $K$
and algebraically closed residue field $k$.
Let ${\cal X}$ be a flat proper scheme
over $A$. Let $X$ be the general fiber
${\cal X}\times_A K$ and $Y$ the special fiber ${\cal X}\times_A k$.
Suppose that $X$ is geometrically
integral and there is a proper birational morphism
$X'\arrow X$ with $X'$ smooth over $K$. Suppose
that there is an algebraically closed field $F$ containing $K$
such that $CH_0$ of $X'_F$ is universally trivial.
Then, for every extension field $l$ of $k$,
every zero-cycle of degree zero in the smooth locus
of $Y_l$ is zero in $CH_0(Y_l)$.
\end{lemma}

\begin{proof}
After replacing $A$ by its completion, we can assume that $A$ is complete.
Then there is an algebraically closed field $F$ containing
the fraction field $K$ of the new ring $A$
such that $CH_0$ of $X'_F$ is universally trivial.
So the class of the diagonal in $CH_0(X'_{F(X')})$
is equal to the class of a $\overline{K}$-point of $X'$. By a specialization
argument, it follows that
there is a finite extension $L$ of $K$ such that the class of the diagonal
in $CH_0(X'_{L(X')})$ is equal to the class of an $L$-point
of $X'$. Since $A$ is complete, the integral closure of $A$ in $L$
is a complete discrete valuation ring
\cite[Proposition II.2.3]{Serre}.
Its residue field is $k$. We now replace $A$ by this discrete valuation
ring and ${\cal X}$ by its pullback to that ring, without changing
the special fiber $Y$.

For any field extension $l$ of $k$, there is a flat local $A$-algebra $B$
such that $\m_B=\m_AB$ and $B$ is a discrete valuation
ring with residue field $l$,
by ``inflation of local rings'' \cite[Ch.~IX, Appendice, Corollaire
du Th\'eor\`eme 1]{Bourbaki}. Let $C$ be the completion of $B$.
Let $M$ be the fraction field of $C$, which is an extension of the field $L$
above. The residue field of $C$
is $l$.
Consider the $C$-scheme ${\cal X}\times_A C$. Its generic fiber
is $X_M$, which has the resolution $X'_M$.
Its special fiber is $Y_l$. 
Since the degree map $CH_0(X'_M)\arrow \Z$ is an isomorphism,
every zero-cycle of degree zero in the smooth locus of $Y_l$
is zero in $CH_0Y_l$ \cite[Proposition 1.9]{CTP}.
\end{proof}

Since a very general hypersurface $W$ of degree $2a+1$
in $\P^{n+1}_{\C}$
degenerates to the union of a very general hypersurface $X$ of degree $2a$
in $\P^{n+1}_{\C}$ and
a very general hyperplane $H$, Lemma \ref{reducible} shows that $CH_0$
of $W$ is not universally trivial
if there is a zero-cycle of degree zero on $X_{k(X)}-(X\cap H)_{k(X)}$
which is not zero in $CH_0(X\cup H)_{k(X)}$.
This holds if we can show that $CH_0(X\cap H)_{k(X)}\arrow CH_0X_{k(X)}$
is not surjective.

If that homomorphism is surjective, then
we have a decomposition of the diagonal
$$\Delta=A+B$$
in $CH_n(X\times X)$, where $A$ is supported on $X\times (X\cap H)$
and $B$ is supported on $S\times X$ for some closed subset $S$ not
equal to $X$. Let $Y$ be the singular variety
in characteristic 2 to which $X$ degenerates (as in the earlier
part of the proof). By the specialization homomorphism on Chow
groups \cite[Proposition 2.6, Example 20.3.5]{Fulton}, the decomposition
of the diagonal in $X\times X$ gives
a similar decomposition
of the diagonal in $Y\times Y$. That is, the class of the diagonal
in $CH_0Y_{k(Y)}$ is in the image of $CH_0(Y_H)_{k(Y)}$,
where $Y_H$ is the subvariety of $Y$ to which
$X\cap H$ degenerates. (Here $Y_H$ is an inseparable double
cover of a hypersurface, like $Y$ itself. We can arrange for
$Y_H$ to be disjoint from the singular set of $Y$, but $Y_H$
will still be singular, with finitely many singular points
of the form described above for $Y$, one dimension lower.)
Since the resolution $Y'$ of $Y$ we constructed has $CH_0Y'\arrow CH_0Y$
universally an isomorphism, it follows that the class
of the diagonal in $CH_0Y'_{k(Y')}$ is in the image
of $CH_0(Y'_H)_{k(Y')}$, where the inverse image $Y'_H$
of $Y_H$ is isomorphic
to $Y_H$. That determines a decomposition of the diagonal
$$\Delta=A+B$$
in $CH_n(Y'\times Y')$, where $A$ is supported on $Y'\times Y'_H$
and $B$ is supported on $S\times Y'$
for some closed subset $S$ not equal to $Y'$.

Now consider the action of correspondences (by pullback
from the second factor to the first) on $H^0(Y',\Omega^{i})$,
for any $i>0$. 
Here $Y'_H$ is singular, with only singularities of the form
described above for $Y$, one dimension lower.
Let $Z'$ be the resolution of $Y'_H$ obtained by blowing up
the singular points.
The diagonal $\Delta$ acts as the identity on $H^0(Y',\Omega^i)$,
and $B$ acts by $0$.
For any form $\beta $ in $H^0(Y',\Omega^{i})$ that pulls
back to zero on $Z'$, $A$ acts by $0$ on $\beta$,
and so the decomposition above gives that $\beta=0$.
That is, we have shown that the restriction
$$H^0(Y',\Omega^{i})\arrow H^0(Z',\Omega^i)$$
is injective for $i>0$.

By viewing $Y$ as a complete intersection in the smooth
locus of a weighted projective space,
we see that $K_Y$ is isomorphic to $O(-n-2+2a)$,
and likewise $K_{Y_H}$ is isomorphic to $O(-n-1+2a)$.
Since we arranged that $2a+1\leq n+1$, $Y_H$ is Fano,
and in particular $H^0(Y_H,K_{Y_H})=0$.
It follows
that the resolution $Z'$ of $Y_H$ has
$H^0(Z',\Omega^{n-1})=H^0(Z',K_{Z'})=0$. 
So the previous paragraph's injection gives that $H^0(Y',\Omega^{n-1})=0$.
Since $a\geq \lceil (n+2)/3\rceil$,
this contradicts the calculation that $H^0(Y',\Omega^{n-1})\neq 0$,
as discussed before
Lemma \ref{form}. We conclude that in fact $CH_0$
of a very general hypersurface of degree $2a+1$ in $\P^{n+1}_{\C}$
is not universally trivial.

Finally, in each even degree covered by this theorem,
there are non-stably-rational smooth hypersurfaces
over $\QQ$. Any hypersurface that reduces to a suitable
double cover over $\overline{\F_2}$ is not stably rational,
and most such hypersurfaces
are smooth. We do not immediately get examples
of odd degree over $\QQ$, since the argument involves two
successive degenerations: first, degenerate a hypersurface
of degree $2a+1$ to a hypersurface of degree $2a$ plus a hyperplane,
and then degenerate the hypersurface of degree $2a$ to a double cover
in characteristic 2.
\end{proof}

\section{Examples over the rational numbers}
\label{examples}

In each even degree covered by Theorem \ref{main},
there are non-stably-rational smooth hypersurfaces
over $\QQ$. We can use
any hypersurface that reduces to a suitable
double cover over $\overline{\F_2}$.
In fact, if a double cover
with only singularities as in the proof of Theorem \ref{main}
exists over $\F_2$,
then there are smooth hypersurfaces over $\Q$ which are not stably
rational over $\C$. We now use this method
to give examples
of smooth quartic 3-folds and smooth quartic 4-folds
over $\Q$ that are
not stably rational over $\C$.

Joel Rosenberg gave some elegant examples of inseparable double covers
of projective space over $\F_2$ with only singularities
as above \cite[section 4.7]{KSC}.
Namely, for any even $n$ and even $d$, the double
cover of $\P^n$ over $\F_2$ ramified over the hypersurface
$x_0^{d-1}x_1+\cdots+x_{n-1}^{d-1}x_n+x_n^{d-1}x_0=0$
has the desired singularities.
It would be interesting to find equally simple examples
of inseparable double covers $Y$ of smooth hypersurfaces
over $\F_2$ such that $Y$ has singularities as above; that is the problem
we encounter here.

\begin{example}
Quartic 4-folds.
\end{example}

For quartic 4-folds, even non-rationality was previously unknown.
The free program Macaulay2 shows that the 4-fold $\{ y^2=f, g=0\}$
in $\P(1^62)=\P(x_0,\ldots,x_5,y)$ over $\F_2$ has only
singularities as in the proof of Theorem \ref{main}
in the example
\begin{align*}
f &= x_0^3x_2+x_0x_3^3+x_1^3x_4+x_1^2x_2x_4+x_1x_4^3+x_2^3x_5
+x_3x_5^3\\
g &= x_0x_1+x_2x_3+x_4x_5.
\end{align*}

The proof of Theorem \ref{main} shows
that any hypersurface $X$ in $\P^5_{\Q}$ of the form $g^2-4f=0$,
where $g$ and $f$ are lifts to $\Z$ of the polynomials above,
has $CH_0$ of $X_{\C}$ not universally trivial. Therefore,
$X_{\C}$ is not stably rational. For example, this applies
to the following quartic 4-fold, which we compute is smooth:
\begin{multline*}
0=x_0^2x_1^2-4x_0^3x_2+2x_0x_1x_2x_3+x_2^2x_3^2
-4x_0x_3^3-4x_1^3x_4\\
-4x_1^2x_2x_4-4x_1x_4^3-4x_2^3x_5
+2x_0x_1x_4x_5+2x_2x_3x_4x_5+x_4^2x_5^2-4x_3x_5^3.
\end{multline*}

\begin{example}
Quartic 3-folds.
\end{example}

Here is an example of a smooth quartic 3-fold
over $\Q$ that is not stably rational over $\C$.
(Following a suggestion by Wittenberg,
Colliot-Th\'el\`ene and Pirutka
gave examples over $\QQ$ \cite[Appendice A]{CTP}.)
Macaulay2 shows that the 3-fold $\{ y^2=f, g=0\}$
in $\P(1^52)=\P(x_0,\ldots,x_4,y)$ over $\F_2$ has only
singularities as in the proof of Theorem \ref{main}
in the example
\begin{align*}
f &= x_0^3x_2+x_1^3x_2+x_0x_1^2x_4+x_0x_2^2x_4+x_3^3x_4\\
g &= x_0x_1+x_2x_3+x_4^2.
\end{align*}

As a result, any hypersurface $X$ in $\P^4_{\Q}$ of the form $g^2-4f=0$,
where $g$ and $f$ are lifts to $\Z$ of the polynomials above,
has $CH_0$ of $X_{\C}$ not universally trivial. Therefore,
$X_{\C}$ is not stably rational. For example, this applies
to the following quartic 3-fold, which we compute is smooth:
\begin{multline*}
0=x_0^2x_1^2-4x_0^3x_2-4x_1^3x_2-8x_2^4+2x_0x_1x_2x_3
+x_2^2x_3^2\\
-4x_0x_1^2x_4-4x_0x_2^2x_4-4x_3^3x_4+2x_0x_1x_4^2
+2x_2x_3x_4^2+x_4^4.
\end{multline*}

\section{Rationality in families}

Given a family of projective varieties for which
the geometric generic fiber is rational,
are all fibers geometrically
rational? The analogous question for geometric ruledness
has a positive answer, by Matsusaka
\cite[Theorem IV.1.6]{Kollarbook}. The statement for geometric rationality
is easily seen to be false if we make no restriction
on the singularities; for example, a smooth cubic surface $X$ can degenerate
to the cone $Y$ over a smooth cubic curve. Here $X$ is rational
over $\C$, but $Y$ is not. In this example, $Y$ is log canonical
but not klt (Kawamata log terminal).

The following application of Theorem \ref{main}, suggested
by Tommaso de Fernex, shows that rationality need not specialize
even when the singularities allowed are mild, namely klt.
It remains an open question whether rationality specializes
when all fibers are smooth, or at least canonical.
If the dimension is at most 3
and the characteristic is zero, rationality
specializes when all fibers are klt, by
de Fernex and Fusi \cite[Theorem 1.3]{DFF} and Hacon
and M\textsuperscript{c}Kernan
\cite[Corollary 1.5]{HM}.

\begin{corollary}
\label{rationality}
For any $m\geq 4$, there is a flat family of projective $m$-folds
over the complex affine line
such that all fibers over $A^1-0$
are smooth rational varieties,
while the fiber over 0 is klt and not rational.
\end{corollary}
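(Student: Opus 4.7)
The plan is to realize the special fiber as a weighted projective cone over a non-stably-rational hypersurface from Theorem \ref{main}, and to smooth this cone to copies of $\P^m$ inside a common ambient weighted projective space. Choose an integer $d$ with $2\lceil(m+1)/3\rceil\leq d\leq m$, which exists for every $m\geq 4$. Use Theorem \ref{main} with $n=m-1$ to pick a smooth $X=\{F(x_0,\ldots,x_m)=0\}\subset \P^{m}$ of degree $d$ whose $CH_0$ is not universally trivial; then $X$ is a smooth Fano variety of Fano index $m+1-d\geq 1$ which is not stably rational. Let $P=\P(1^{m+1},d)$ with coordinates $x_0,\ldots,x_m,y$ of weights $1,\ldots,1,d$; its unique singular point $p=[0\!:\!\cdots\!:\!0\!:\!1]$ is a cyclic quotient singularity of type $\tfrac{1}{d}(1,\ldots,1)$.

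Consider the flat family of weighted degree-$d$ divisors
$$\mathcal{W}=\{ty+F(x_0,\ldots,x_m)=0\}\subset P\times A^1_t.$$
For $t\neq 0$ the point $p$ does not lie on $W_t$, and the morphism $\P^m\to W_t,\ [x]\mapsto[x\!:\!-F(x)/t]$ is an isomorphism with inverse $[x\!:\!y]\mapsto[x]$, so $W_t\cong \P^m$ is a smooth rational $m$-fold. The central fiber $W_0=\{F=0\}$ is smooth away from $p$, and the projection $(x,y)\mapsto x$ identifies $W_0\setminus\{p\}$ with the total space of the line bundle $\mathcal{O}_X(d)$, so $W_0$ is birational to $X\times\P^1$. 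Since $X$ is not stably rational, $W_0$ is not rational.

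It remains to verify that $W_0$ is klt at $p$. In the affine chart $\{y\neq 0\}\cong A^{m+1}/\mu_d$, the variety $W_0$ is locally the quotient of the affine cone $\tilde X=\{F=0\}\subset A^{m+1}$ by the diagonal $\mu_d$-action, whose only fixed point is the origin, so the action is free in codimension one. The cone $\tilde X$ is klt (in fact canonical) at the vertex because its blowup at the origin has exceptional divisor isomorphic to $X$ with discrepancy $m-d\geq 0$. A finite quotient by an action that is free in codimension one preserves the klt property, so $W_0$ is klt. The substantive input is Theorem \ref{main}, which supplies the non-stably-rational $X$ and so forces $W_0$ not to be rational; given that, the identification $W_t\cong\P^m$ for $t\neq 0$ is immediate, and the main technical point to check is the klt property of $W_0$ at the weighted cone vertex $p$.
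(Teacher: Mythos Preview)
Your proof is correct and follows the same strategy as the paper: degenerate $\P^m$ to a projective cone over a smooth Fano hypersurface $X\subset\P^m$ of degree $d\le m$ that is not stably rational (supplied by Theorem~\ref{main} with $n=m-1$), so that the special fiber is birational to $X\times\P^1$ and hence not rational, while the Fano condition makes the cone singularity klt.

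The difference is only in packaging. The paper obtains the degeneration by applying the classical ``variety degenerates to the cone over a hyperplane section'' to the $d$-uple Veronese of $\P^m$, and then cites Koll\'ar's criterion \cite[Lemma 3.1]{Kollarsing} for when such a cone is klt; this requires checking projective normality and $H^1$-vanishing so that the scheme-theoretic special fiber really is the cone. You instead write the same family explicitly as the pencil $\{ty+F=0\}$ in $\P(1^{m+1},d)$, which makes the isomorphism $W_t\cong\P^m$ for $t\neq 0$ and the flatness immediate, and you verify the klt property directly via the discrepancy $m-d\ge 0$ of the blowup of the affine cone together with descent of klt under a finite quotient that is free in codimension one. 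Your route is thus a bit more self-contained, at the cost of the short extra computation of the discrepancy; the underlying geometry and the special fiber (locally at the vertex, the cone $\Spec\bigoplus_{n\ge 0}H^0(X,O_X(nd))$) are the same in both arguments.
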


\begin{proof}
For a variety $X$ embedded in projective space $\P^N$ (with a proviso
to follow), $X$ degenerates
to the projective cone over a hyperplane section $X\cap H$.
To see this, consider the projective cone
over $X$ in $\P^{N+1}$, and intersect it with a pencil $\P^1$
of hyperplanes in $\P^{N+1}$. For most hyperplanes in the pencil,
the intersection is isomorphic to $X$, while for a hyperplane
through the node, the underlying set of the intersection
is the projective cone over $X\cap H$. On the other hand,
that intersection could be different from the projective cone
over $X\cap H$ as a scheme because it is non-reduced at the cone point.
That problem does not arise if $X\subset \P^N$ is projectively normal
and $H^1(X,O(j))=0$ for all $j\geq -1$ \cite[Proposition 3.10]{Kollarsing},
as will be true in the following example.

By considering a Veronese embedding of a projective space $X=\P^m$,
it follows that for any
hypersurface $W$ of degree $d$ in $\P^m$, $\P^m$ degenerates
to the projective cone $Y$ over $(W,O(d))$.
If the hypersurface $W$ is not stably rational, then this is a degeneration
of a smooth projective rational variety (in fact, projective 
space) to a variety $Y$ which is not rational. Indeed, $Y$ is birational
to $\P^1\times W$. (So just knowing that $W$ is not rational
would not be enough.)

It remains to describe the singularities of $Y$. Namely, the projective cone
$Y$ over $(W,O(d))$ is klt if and only if $d\leq m$, meaning that
$W$ is Fano \cite[Lemma 3.1]{Kollarsing}. (Note that $Y$ is {\it not }the
projective cone over $(W,O(1))$ in $\P^{m+1}$, which has a milder
singularity.)
For every $m\geq 4$, Theorem \ref{main} gives
a smooth Fano hypersurface $W$ in $\P^m$ which is not stably rational.
(For $m=4$, this is Colliot-Th\'el\`ene and Pirutka's theorem
\cite{CTP}.)
Thus we have a degeneration of a smooth projective rational $m$-fold to a klt
variety which is not rational.
\end{proof}


\small \sc UCLA Mathematics Department, Box 951555,
Los Angeles, CA 90095-1555

totaro@math.ucla.edu

\begin{thebibliography}{99}

\bibitem{Beauvillesextic} A.~Beauville. A very general sextic
double solid is not stably rational. \url{arXiv:1411.7484}

\bibitem{Beauvillequartic} A.~Beauville. A very general quartic
double fourfold or fivefold is not stably rational.
{\it Algebraic Geometry}, to appear.

\bibitem{Bourbaki} N.~Bourbaki. {\it Alg\`ebre commutative.
Chapitres 8 et 9. }Springer (2006).

\bibitem{CG} C.~H.~Clemens and P.~A.~Griffiths.
The intermediate Jacobian of the cubic threefold.
{\it Ann.\ Math.\ }{\bf 95 }(1972), 281--356.

\bibitem{CC} J.-L.~Colliot-Th\'el\`ene and D.~Coray.
L'\'equivalence rationnelle sur les points ferm\'es
des surfaces rationnelles fibr\'ees en coniques.
{\it Compositio Math. }{\bf 39 }(1979), 301--332.

\bibitem{CTP} J.-L.~Colliot-Th\'el\`ene
and A.~Pirutka. Hypersurfaces quartiques de dimension 3:
non rationalit\'e stable. {\it Ann.\ Sci.\ \'Ecole Normale
Sup\'erieure}, to appear.

\bibitem{DFF} T.~de Fernex and D.~Fusi.
Rationality in families of threefolds. 
{\it Rend.\ Circ.\ Mat.\ Palermo }{\bf 62 }(2013), 127--135.

\bibitem{Fulton} W.~Fulton. {\it Intersection theory. }Springer (1998).

\bibitem{Gros} M.~Gros. Classes de Chern et classes de cycles
en cohomologie de Hodge-Witt logarithmique.
{\it M\'em.\ Soc.\ Math.\ France }{\bf 21 }(1985), 87 pp.

\bibitem{HM} C.~Hacon and J.~M\textsuperscript{c}Kernan.
On Shokurov's rational connectedness conjecture.
{\it Duke Math.\ J.\ }{\bf 138 }(2007), 119-136.

\bibitem{IM} V.~A.~Iskovskikh and Yu.~I.~Manin.
Three-dimensional quartics and counterexamples to the L\"uroth problem.
{\it Mat.\ Sb.\ }{\bf 86 }(1971), 140--166.
Eng. trans., {\it Math.\ Sb.\ }{\bf 15 }(1972), 141--166.

\bibitem{Kollarhyper} J.~Koll\'ar. Nonrational hypersurfaces.
{\it J.\ Amer.\ Math.\ Soc.\ }{\bf 8 }(1995),
241--249.

\bibitem{Kollarbook} J.~Koll\'ar. {\it Rational curves
on algebraic varieties. }Springer (1996).

\bibitem{Kollarcover} J.~Koll\'ar. Nonrational covers
of $\C\P^m\times \C\P^n$. {\it Explicit birational geometry
of 3-folds}, 51--71. Cambridge (2000).

\bibitem{Kollarsing} J.~Koll\'ar. {\it Singularities
of the minimal model program. }Cambridge (2013).

\bibitem{KSC} J.~Koll\'ar, K.~Smith, and A.~Corti.
{\it Rational and nearly rational varieties. }Cambridge (2004).

\bibitem{Merkurjev} A.~Merkurjev. Unramified elements
in cycle modules. {\it J.\ London Math.\ Soc.\ }{\bf 78 }(2008),
51--64.

\bibitem{Mori} S.~Mori. On a generalization of complete intersections.
{\it J.\ Math.\ Kyoto Univ.\ }{\bf 15 }(1975),
619--646.

\bibitem{Pukhlikov} A.~Pukhlikov. {\it Birationally rigid
varieties. }American Mathematical Society (2013).

\bibitem{Serre} J.-P.~Serre. {\it Local fields. }Springer (1979).

\bibitem{Totaromotive} B.~Totaro. The motive
of a classifying space.
\url{arXiv:1407.1366}

\bibitem{Voisinbook} C.~Voisin. {\it Chow rings,
decomposition of the diagonal, and the topology
of families. }Princeton (2014).

\bibitem{Voisin} C.~Voisin, Unirational threefolds
with no universal codimension 2 cycle. {\it Invent.\ Math.},
to appear.

\end{thebibliography}
\end{document}